\theoremstyle{plain}
\newtheorem{theorem}{Theorem}[section]
\newtheorem{lemma}[theorem]{Lemma}
\theoremstyle{definition}
\theoremstyle{remark}
\begin{document}
	\title[On rings whose prime ideal sum graphs are line graphs]{On rings whose prime ideal sum graphs are line graphs}

   \author[Praveen Mathil, Jitender Kumar]{Praveen Mathil, Jitender Kumar$^{*}$}
   \address{Department of Mathematics, Birla Institute of Technology and Science Pilani, Pilani-333031, India}
 \email{maithilpraveen@gmail.com,  jitenderarora09@gmail.com}

\begin{abstract}
Let $R$ be a commutative ring with unity. The prime ideal sum graph of the ring $R$ is the simple undirected graph whose vertex set is the set of all nonzero proper ideals of $R$ and two distinct vertices $I$, $J$ are adjacent if and only if $I + J$ is a prime ideal of $R$. In this paper, we characterize all commutative Artinian rings whose prime ideal sum graphs are line graphs. Finally, we give a description of all commutative Artinian rings whose prime ideal sum graph is the complement of a line graph.
\end{abstract}
 \subjclass[2020]{05C25, 13A99}
\keywords{Prime ideal sum graph, Artinian ring, local ring, line graph  \\ *  Corresponding author}
\maketitle

\section{Historical Background and Preliminaries}
The investigation of graphs associated with algebraic structures represents a substantial and vital research area, standing as one of the key focal points within the field of algebraic graph theory. This domain provides a profound interplay between algebra and graph theory, bridging two fundamental branches of mathematics. The comprehensive study of graphs associated with algebraic structures has been of substantial interest due to their applications and their connections to automata theory (see \cite{kelarev2003graph,kelarev2009cayley,kelarev2004labelled}). Various graphs associated with the rings have been studied in the literature, viz: cozero-divisor graph \cite{afkhami2011cozero}, zero divisor graphs \cite{Anderson1999zero}, annihilator graph \cite{badawi2014annihilator}, intersection graph of ideals \cite{chakrabarty2009intersection}, comaximal graph \cite{maimani2008comaximal}, prime ideal sum graph \cite{saha2023prime} etc. 

The prime ideal sum graph of a commutative ring was introduced by Saha \emph{et al.} \cite{saha2023prime}. The \emph{prime ideal sum graph} $\text{PIS}(R)$ of the ring $R$ is the simple undirected graph whose vertex set is the set of all nonzero proper ideals of $R$ and two distinct vertices $I$, $J$ are adjacent if and only if $I + J$ is a prime ideal of $R$. Authors of \cite{saha2023prime} studied some graph-theoretic properties of $\text{PIS}(R)$ such as the clique number, the chromatic number, the domination number etc. The metric dimension and strong metric dimension of the prime ideal sum graph of various classes of rings have been discussed in \cite{adlifard2023metric,mathil2023strong}. Various embeddings of the prime ideal sum graphs on surfaces have been investigated in \cite{mathil2022embedding}. Moreover, they have studied the prime ideal sum graphs with certain forbidden induced subgraphs such as split, threshold, chordal and cograph. The line graph $L(\Gamma)$ of the graph $\Gamma$ is the graph whose vertex set is all the edges of $\Gamma$ and two vertices of $L(\Gamma)$ are adjacent if they are incident in $\Gamma$. Line graphs are described by the nine forbidden subgraphs (cf. Theorem \ref{linegraphchar}). The exploration of algebraic graphs concerning line graphs has been extensively studied by numerous researchers (see \cite{barati2021line,bera2022line,khojasteh2022line,kumar2023finite,pirzada2022line,singh2022graph}). In this paper, we study when the prime ideal sum graph is a line graph. Also, we investigate when the prime ideal sum graph is the complement of a line graph. In Section \ref{section2}, we characterize all commutative Artinian rings whose prime ideal sum graphs are line graphs. In Section \ref{section3}, we characterize all commutative Artinian rings whose prime ideal sum graphs are complements of the line graphs.

We now recall the necessary definitions and results for later use. A \emph{graph} $\Gamma$ is an ordered pair $(V(\Gamma), E(\Gamma))$, where $V(\Gamma)$ is the set of vertices and $E(\Gamma)$ is the set of edges of $\Gamma$. Two distinct vertices $u, v \in V(\Gamma)$ are $\mathit{adjacent}$ in $\Gamma$, denoted by $u \sim v$ (or $(u,  v)$), if there is an edge between $u$ and $v$. Otherwise, we write as $u \nsim v$. Let $\Gamma$ be a graph. A graph $\Gamma'$ is said to be a \emph{subgraph} of $\Gamma$ if $V(\Gamma') \subseteq V(\Gamma)$ and $E(\Gamma') \subseteq E(\Gamma)$. The \emph{complement} $\overline{\Gamma}$ of a graph $\Gamma$ is the graph with vertex set $V(\Gamma)$ and two vertices are adjacent if and only if they are not adjacent in $\Gamma$. If $X \subseteq V(\Gamma)$, then the subgraph $\Gamma(X)$ induced by $X$ is the graph with vertex set $X$ and two vertices of $\Gamma(X)$ are adjacent if and only if they are adjacent in $\Gamma$. A \emph{path} in a graph is a sequence of distinct vertices with the property that each vertex in the sequence is adjacent to the next vertex of it. We use $P_n$ to denote the path graph on $n$ vertices. A graph $\Gamma$ is said to be \emph{complete} if any two vertices are adjacent in $\Gamma$. The complete graph on $n$ vertices is denoted by $K_n$. We denote $mK_n$ by the $m$ copies of $K_n$. A graph $\Gamma$ is said to be \emph{bipartite} if $V(\Gamma)$ can be partitioned into two subsets such that no two vertices in the same partition subset are adjacent. A \emph{complete bipartite} graph is a bipartite graph such that every vertex in one part is adjacent to all the vertices of the other part. A \emph{complete bipartite graph} with partition size $m$ and $n$ is denoted by $K_{m, n}$. 

Throughout the paper, the ring $R$ is a commutative Artinian ring with unity and $F_i$ denotes a field. Further, $U(R)$ denotes the set of all the units of $R$. For basic definitions of ring theory, we refer the reader to  \cite{atiyah1969introduction}. A ring $R$ is said to be \emph{local} if it has a unique maximal ideal $\mathcal{M}$. For a field $F$, we take $\mathcal{M}= \langle 0 \rangle $. By $\mathcal{I}^*(R)$, we mean the set of non-trivial proper ideals of $R$. The \emph{nilpotent index} $\eta(I)$ of an ideal $I$ of the ring $R$ is the smallest positive integer $n$ such that $I^n = 0$. The ideal generated by the elements $x_1, x_2, \ldots, x_n$ ($n \ge 1$) is denoted by $\langle x_1, x_2, \ldots, x_n \rangle$. By the structural theorem \cite{atiyah1969introduction}, an Artinian non-local commutative ring $R$ is uniquely (up to isomorphism) a finite direct product of local rings $R_i$ that is $R \cong R_1 \times R_2 \times \cdots \times R_n$, where $n \geq 2$. 

We will frequently use the following characterizations of the line graphs (see \cite{beineke1970characterizations}).  

 \begin{theorem}\label{linegraphchar} 
 A graph $\Gamma$ is the line graph of some graph if and only if none of the nine graphs in \rm{Figure \ref{forbidden_graphs}} is an induced subgraph of $\Gamma$. 
    \begin{figure}[h!]
			\centering
			\includegraphics[width=1 \textwidth]{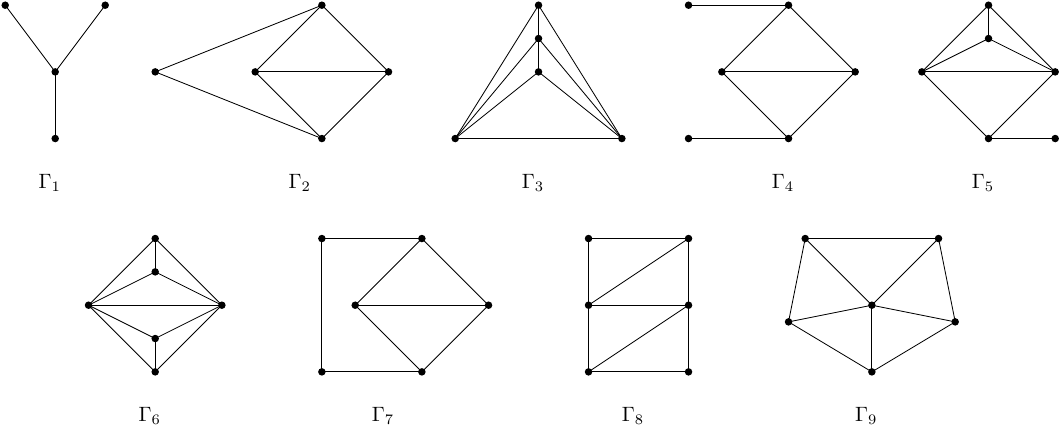}
			\caption{Forbidden induced subgraphs of line graph }
   \label{forbidden_graphs}
\end{figure}
 \end{theorem}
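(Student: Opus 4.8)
The plan is to establish both implications of the equivalence separately. The forward direction, that a line graph contains none of the nine graphs as an induced subgraph, is the routine one. I would first record that the class of line graphs is closed under passing to induced subgraphs: if $\Gamma = L(H)$ and $W \subseteq V(\Gamma) = E(H)$, then the induced subgraph $\Gamma(W)$ is precisely $L(H')$, where $H'$ is the subgraph of $H$ consisting of the edges in $W$ together with their endpoints. Granting this, it suffices to verify by hand that none of the nine graphs in Figure \ref{forbidden_graphs} is itself a line graph. The prototype is the claw $K_{1,3}$: if $L(H) \cong K_{1,3}$ then $H$ has four edges, one (the centre) meeting each of the other three while those three are pairwise disjoint; since the centre has only two endpoints, two of the three leaf edges must share an endpoint, contradicting their non-adjacency. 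The remaining eight graphs are disposed of by similar finite checks.

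The substance lies in the reverse direction. Here the main tool is Krausz's criterion: $\Gamma$ is a line graph if and only if its edge set can be partitioned into complete subgraphs (cliques) so that every vertex lies in at most two of them. From such a family (a \emph{Krausz partition}) one recovers a graph $H$ with $\Gamma \cong L(H)$ by letting the vertices of $H$ be the cliques of the partition and letting each vertex $x$ of $\Gamma$ become the edge of $H$ joining the (at most two) cliques that contain it, a vertex lying in a single clique receiving a private second endpoint. The goal thus becomes: assuming $\Gamma$ contains none of the nine forbidden graphs as an induced subgraph, construct a Krausz partition. The first forbidden graph $K_{1,3}$ supplies claw-freeness, which controls the local structure by forcing the neighbourhood of every vertex to have independence number at most two; the role of the remaining eight graphs is to upgrade this into the statements that each neighbourhood is a union of at most two cliques and that distinct maximal cliques of $\Gamma$ overlap in at most one vertex.

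To carry this out I would follow the van Rooij--Wilf analysis of triangles: call a triangle of $\Gamma$ \emph{odd} if some vertex is adjacent to an odd number of its three vertices and \emph{even} otherwise, and prove that whenever two odd triangles share an edge, the four vertices involved must induce a $K_4$; for otherwise one of the eight remaining forbidden graphs appears as an induced subgraph. This merging property lets me group the triangles and edges of $\Gamma$ into the cliques of the desired Krausz partition and verify that no vertex is overloaded. The hard part, and the genuinely technical heart of the argument, is precisely this case analysis: one must show that each of the eight extra forbidden graphs encodes a specific way the clique structure could fail, such as an edge forced into three mutually intersecting but non-coalescing cliques, or two overlapping triangles that refuse to complete to a $K_4$, and, conversely, that excluding all nine graphs eliminates every such failure mode. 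Establishing that these nine obstructions are simultaneously necessary and exhaustive is where essentially all of the work resides.
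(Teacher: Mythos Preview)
The paper does not prove this theorem at all: it is quoted as a classical result with a citation to Beineke \cite{beineke1970characterizations}, so there is no in-paper argument to compare your proposal against. What you have written is a reasonable high-level plan for reproducing the standard proof---hereditariness of line graphs plus a direct check for the forward direction, and the Krausz partition reconstructed via the van Rooij--Wilf odd/even triangle dichotomy for the converse---but it remains a sketch rather than a proof, since you explicitly defer the case analysis that matches each of the eight non-claw obstructions to a specific failure of the clique decomposition. If your intent was merely to indicate why the theorem is true before invoking it, the outline is fine; if you meant it as a self-contained proof, the technical core is still missing and you would need to either carry out that case analysis in full or, as the paper does, simply cite Beineke.
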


  \begin{theorem}
  A graph $\Gamma$ is the complement of a line graph if and only if none of the nine graphs in \rm{Figure \ref{complenentforbidden_graphs}} is an induced subgraph of $\Gamma$. 
    \begin{figure}[h!]
			\centering
			\includegraphics[width=1 \textwidth]{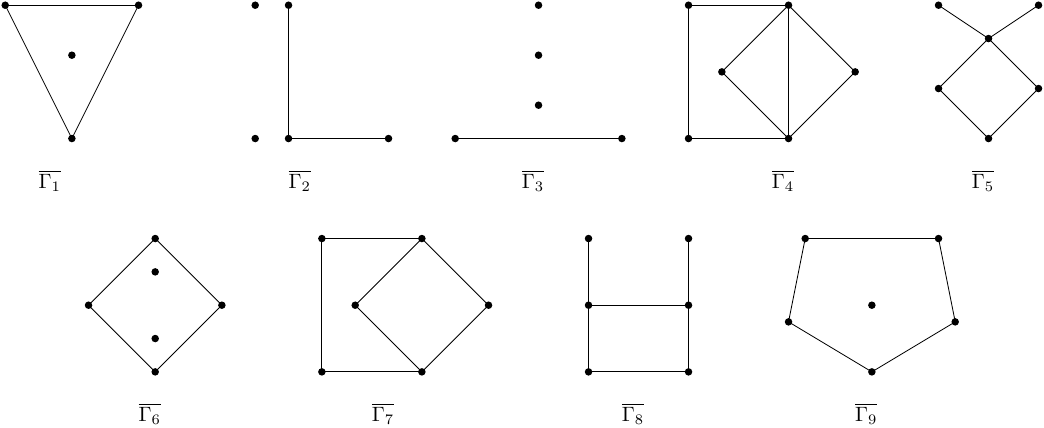}
			\caption{Forbidden induced subgraphs of complement of line graphs }
   \label{complenentforbidden_graphs}
\end{figure}
 \end{theorem}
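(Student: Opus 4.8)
The plan is to deduce this statement from Theorem \ref{linegraphchar} by a complementation argument, rather than reproving a forbidden-subgraph characterization from scratch. The starting observation is the tautology that $\Gamma$ is the complement of a line graph if and only if $\overline{\Gamma}$ is itself a line graph: if $\Gamma = \overline{L(H)}$ for some graph $H$, then $\overline{\Gamma} = L(H)$, and conversely. Thus the whole problem reduces to applying the already-recalled characterization of line graphs to $\overline{\Gamma}$, and then translating the conclusion back to $\Gamma$.

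First I would apply Theorem \ref{linegraphchar} to $\overline{\Gamma}$: it is a line graph if and only if none of the nine Beineke graphs of Figure \ref{forbidden_graphs} occurs as an induced subgraph of $\overline{\Gamma}$. The next step is the elementary duality for induced subgraphs, namely that for any graph $H$, a copy of $H$ is an induced subgraph of $\overline{\Gamma}$ precisely when a copy of $\overline{H}$ is an induced subgraph of $\Gamma$. This is because an induced subgraph is determined by its vertex set together with the adjacencies inherited on that set, and passing to the complement fixes the vertex set while flipping every adjacency; hence the induced subgraphs of $\overline{\Gamma}$ are exactly the complements of the induced subgraphs of $\Gamma$. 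Combining the two facts, $\overline{\Gamma}$ contains one of the nine Beineke graphs as an induced subgraph if and only if $\Gamma$ contains the \emph{complement} of that Beineke graph as an induced subgraph.

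Therefore $\Gamma$ is the complement of a line graph if and only if $\Gamma$ contains none of the complements of the nine Beineke graphs as an induced subgraph. To finish, I would verify that the nine graphs displayed in Figure \ref{complenentforbidden_graphs} are precisely the complements of the nine graphs in Figure \ref{forbidden_graphs}, matching them up one at a time. I expect this last matching to be the only real obstacle, and it is purely bookkeeping: one must confirm that complementing each Beineke forbidden graph reproduces exactly the corresponding graph on the second list, with no duplications and with the list complete, so that the forbidden family for complements of line graphs is correctly identified.
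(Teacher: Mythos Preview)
Your argument is correct, but note that the paper does not actually prove this theorem: it is stated as a preliminary result (immediately after Theorem~\ref{linegraphchar}, and under the same citation to Beineke) and is used later without proof. Your complementation argument is exactly the standard way to derive this characterization from Theorem~\ref{linegraphchar}, so there is nothing to compare---you have simply supplied the routine justification that the paper omits.
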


\section{Prime ideal sum graphs which are line graphs}\label{section2}

In this section, we characterize all the commutative Artinian rings $R$ such that $\textnormal{PIS}(R)$ is a line graph. In the following theorem, first we characterize all the commutative local rings whose prime ideal sum graph is a line graph.

\begin{theorem}\label{primeline}
    Let $R$ be a commutative local ring with maximal ideal $\mathcal{M}$. Then $\textnormal{PIS}(R)$ is a line graph of some graph if and only if one of the following holds.
    \begin{enumerate}[\rm(i)]
        \item $R$ is a principal ideal ring such that $\eta(\mathcal{M}) \le 4$.
        \item $\mathcal{M} = \langle x, y \rangle$ for some $x,y \in R$ such that $x^2 = y^2 =0$. 
    \end{enumerate}
\end{theorem}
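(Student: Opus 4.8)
The plan is to exploit that $R$ is Artinian and local, so its unique prime ideal is $\mathcal{M}$; hence two distinct vertices $I,J$ are adjacent in $\textnormal{PIS}(R)$ precisely when $I+J=\mathcal{M}$, and in particular $\mathcal{M}$ is adjacent to every other vertex. Thus $\textnormal{PIS}(R)$ is obtained by joining the universal vertex $\mathcal{M}$ to the graph $H$ induced on the proper subideals of $\mathcal{M}$. Because the claw $K_{1,3}$ is one of the nine forbidden subgraphs of Theorem \ref{linegraphchar} and $\mathcal{M}$ is universal, a first necessary condition is that $H$ contain no independent set of size three; equivalently, there are never three proper ideals of $R$ that pairwise fail to sum to $\mathcal{M}$. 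This single observation drives almost every case.

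For the principal case I would argue directly. If $\mathcal{M}=\langle m\rangle$, every ideal is a power $\mathcal{M}^{k}$, so the nonzero proper ideals are $\mathcal{M},\mathcal{M}^{2},\dots,\mathcal{M}^{\eta-1}$ with $\mathcal{M}^{i}+\mathcal{M}^{j}=\mathcal{M}^{\min(i,j)}$; hence $\textnormal{PIS}(R)$ is the star $K_{1,\eta-2}$ centred at $\mathcal{M}$. A star is a line graph exactly when it has at most two leaves (three leaves form a claw), i.e. when $\eta(\mathcal{M})\le 4$, which is precisely (i).

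Now suppose $\mathcal{M}$ is not principal and set $F=R/\mathcal{M}$ and $d=\dim_{F}\mathcal{M}/\mathcal{M}^{2}\ge 2$. Ideals $I$ with $\mathcal{M}^{2}\subseteq I\subseteq \mathcal{M}$ correspond to $F$-subspaces of $\mathcal{M}/\mathcal{M}^{2}$, and $I+J=\mathcal{M}$ iff the subspaces span. I would now run three claw arguments. First, if $d\ge 3$, three distinct lines lying in a common plane of $\mathcal{M}/\mathcal{M}^{2}$ give three ideals whose pairwise sums lie in a proper subspace, a claw; so $d=2$. Second, if $\mathcal{M}^{3}\ne 0$ then $\mathcal{M}^{3}\subsetneq \mathcal{M}^{2}\subsetneq I_{\ell}$ (for a line ideal $I_{\ell}$) are pairwise non-adjacent, a claw; so $\mathcal{M}^{3}=0$. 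Third, if moreover $\mathcal{M}^{2}\ne 0$, then $\mathrm{Ann}(\mathcal{M})$ is a proper subspace of $\mathcal{M}$, and if it had dimension $\ge 2$ it would contain at least three minimal ideals whose pairwise sums stay inside $\mathrm{Ann}(\mathcal{M})\subsetneq \mathcal{M}$, once more a claw; so the socle is one-dimensional, forcing $\dim_{F}\mathcal{M}^{2}=1$ with $\mathcal{M}^{2}$ the unique minimal ideal.

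These conditions pin down the graph. Since $\mathcal{M}^{2}$ is the unique minimal ideal (or is zero), every nonzero proper ideal lies between $\mathcal{M}^{2}$ and $\mathcal{M}$, so the vertices are $\mathcal{M}$, the line ideals $I_{\ell}$, and (if $\mathcal{M}^{2}\ne 0$) the vertex $\mathcal{M}^{2}$; the $I_{\ell}$ form a clique, $\mathcal{M}$ is universal, and $\mathcal{M}^{2}$ is a pendant at $\mathcal{M}$. Thus $\textnormal{PIS}(R)$ is a complete graph, or a complete graph with one pendant vertex, and both are line graphs --- a complete graph is the line graph of a star, and attaching the pendant corresponds to adding one edge at a leaf of that star. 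The remaining, and I expect hardest, step is the algebraic translation into the generator form of (ii): because $d=2$ and $\mathcal{M}^{3}=0$, squaring descends to a well-defined quadratic map $q\colon \mathcal{M}/\mathcal{M}^{2}\to\mathcal{M}^{2}$, and what must be produced is a basis $\bar x,\bar y$ on which $q$ vanishes, i.e. generators $x,y$ of $\mathcal{M}$ with $x^{2}=y^{2}=0$ (whence $\mathcal{M}^{2}=\langle xy\rangle$ and $\mathcal{M}^{3}=0$, recovering the graph above). Controlling this squaring form, rather than any graph-theoretic point, is where the real work of the non-principal direction lies.
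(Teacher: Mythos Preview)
Your structural route---using the universal vertex $\mathcal{M}$ to reduce everything to claw-avoidance, then forcing $d=2$, $\mathcal{M}^{3}=0$, and a one-dimensional socle---is correct and in fact cleaner than the paper's, and your identification of $\textnormal{PIS}(R)$ as a clique with at most one pendant (hence a line graph) handles the converse. Where you stop, the paper does not argue structurally at all: for the $m=2$ forward direction it simply asserts that if $x^{2}\neq 0$ then $\{\mathcal{M},\langle x\rangle,\langle x^{2}\rangle,\langle x^{2}+x\rangle\}$ induces a $K_{1,3}$, so that $x^{2}=y^{2}=0$ for the chosen minimal generators, bypassing your quadratic-form analysis entirely.

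Your caution about that last step is warranted, however, and in fact exposes a problem with the paper's shortcut. Since $1+x$ is a unit in the local ring $R$, one has $\langle x^{2}+x\rangle=\langle x(1+x)\rangle=\langle x\rangle$, so the paper's four-vertex set collapses to three and no claw is produced. Worse, the step you isolate genuinely fails in general: take $R=F[X,Y]/(XY,\,X^{2}-Y^{2})$ with $F=\mathbb{F}_{3}$ (or $F=\mathbb{R}$), a local Artinian ring with $d=2$, $\mathcal{M}^{3}=0$, and $\mathcal{M}^{2}=\langle x^{2}\rangle$ the unique minimal ideal. Here $q(a\bar x+b\bar y)=(a^{2}+b^{2})x^{2}$ is anisotropic, so no element of $\mathcal{M}\setminus\mathcal{M}^{2}$ squares to zero and condition~(ii) cannot hold; yet by your own analysis $\textnormal{PIS}(R)$ is a clique with one pendant and hence a line graph. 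The gap you flagged is therefore not a routine verification but a genuine obstruction: the forward implication to (ii) does not go through as stated without an additional hypothesis on the residue field.
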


\begin{proof}
    Suppose that $\textnormal{PIS}(R)$ is a line graph of some graph. Let $\{ x_1, x_2, \ldots , x_m\}$ be a minimal set of generators of $\mathcal{M}$. First assume that $m=1$, i.e. $R$ is a principal ideal ring. If $\eta(\mathcal{M}) \ge 5$, then the subgraph induced by the set $\{ \mathcal{M}, \mathcal{M}^2, \mathcal{M}^3, \mathcal{M}^4 \}$  is isomorphic to $K_{1,3}$, a contradiction. It implies that $\eta(\mathcal{M}) \le 4$. Now if $m =2$, then $\mathcal{M} = \langle x, y \rangle$ for some $x,y \in R$. For $x^2 \neq 0$, note that the subgraph induced by the set $\{ \mathcal{M}, \langle x \rangle, \langle x^2 \rangle, \langle x^2 +x \rangle  \}$ is isomorphic to $K_{1,3}$, which is not possible. It follows that for $\textnormal{PIS}(R)$ to be a line graph of some graph, we must have $x^2 = y^2 = 0$. If $m \ge 3$, then the subgraph induced by the set $\{ \mathcal{M}, \langle x_1 \rangle, \langle x_2 \rangle, \langle x_3 \rangle  \}$ is isomorphic to $K_{1,3}$, again a contradiction.

    Conversely, first let $R$ be a principal ideal ring such that $\eta(\mathcal{M}) \le 4$. If $\eta(\mathcal{M}) =4$, then note that $\textnormal{PIS}(R) \cong P_3 = L(P_4)$. If $\eta(\mathcal{M}) =3$, then $\textnormal{PIS}(R) \cong K_2 = L(P_3)$. If $\eta(\mathcal{M}) =2$, then $\textnormal{PIS}(R) = L(K_2)$. If $\eta(\mathcal{M}) =1$, i.e. $R$ is a field, then $\textnormal{PIS}(R)$ is an empty graph which is the line graph of the null graph. Next, assume that $\mathcal{M} = \langle x, y \rangle$, for some $x,y \in R$ such that $x^2 = y^2 = 0$. Then all the non-trivial ideals of $R$ are $\mathcal{M}$, $\langle x \rangle$, $\langle y \rangle$, $\langle xy \rangle$, $\langle x + uy \rangle$, where $u \in U(R)$. Now we show that $u-v \in \mathcal{M}$ if and only if $\langle x + uy \rangle = \langle x + vy \rangle$. Let $u-v \in \mathcal{M}$. Then $\langle x + uy \rangle = \langle x + (u-v+v)y \rangle = \langle x + vy \rangle$. Conversely, if $\langle x + uy \rangle = \langle x + vy \rangle$, then $x + uy = \alpha(x + vy)$ for some $\alpha \in R$. It follows that $(1- \alpha)x = (\alpha v -u)y$ and so $(1- \alpha)x \in \langle y \rangle$ and $(\alpha v -u)y \in \langle x \rangle$. Since $\mathcal{M}$ is minimally generated by two elements, we must have $1- \alpha \in \mathcal{M}$ and $\alpha v -u \in \mathcal{M}$. Consequently, $v-u = v(1-\alpha) + \alpha v -u \in \mathcal{M}$. Therefore, we have $|R/\mathcal{M}| + 3$ non-trivial ideals of $R$. 

    Next, we show that for the distinct vertices $\langle x + uy \rangle$ and $ \langle x + vy \rangle$, where $u,v \in U(R)$, we have $\langle x + uy \rangle + \langle x + vy \rangle = \mathcal{M}$. Clearly, $\langle x + uy \rangle + \langle x + vy \rangle \subseteq \mathcal{M}$. Since $v-u \notin \mathcal{M}$, we have $x = v(v-u)^{-1}(x + uy) + [1-v(v-u)^{-1}](x +vy)$. It follows that $x \in \langle x + uy \rangle + \langle x + vy \rangle$. Similarly, $y \in \langle x + uy \rangle + \langle x + vy \rangle$. Consequently, $\mathcal{M} \subseteq \langle x + uy \rangle + \langle x + vy \rangle$. 
    
    Thus, the subgraph induced by the set $\mathcal{I}^*(R) \setminus \{ \langle xy \rangle \}$ is a complete graph. Now, if $xy =0$, then being a complete graph on $|R/\mathcal{M}| + 2$ vertices, $\textnormal{PIS}(R)$ is a line graph of the graph $K_{1, |R/\mathcal{M}| + 2}$. If $xy \neq 0$, then the vertex $\langle xy \rangle$ is of  degree one (adjacent to $\mathcal{M}$ only). Consequently, for $xy \neq 0$, any induced subgraph of $\textnormal{PIS}(R)$ can not be isomorphic to any of the graphs given in Figure \ref{forbidden_graphs}. Thus, $\textnormal{PIS}(R)$ is a line graph of some graph. This completes our proof.    
 \end{proof}

Now we investigate non-local commutative rings whose prime ideal sum graphs are line graphs.

\begin{lemma}\label{fourproduct}
 Let $R \cong R_1 \times R_2 \times \cdots \times R_n$ $(n \ge 2)$ be a non-local commutative ring, where each $R_i$ is a local ring with maximal ideal $\mathcal{M}_i$. If $n \ge 4$, then $\textnormal{PIS}(R)$ is not a line graph.   
\end{lemma}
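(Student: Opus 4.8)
The plan is to show that $\textnormal{PIS}(R)$ contains an induced claw $K_{1,3}$, which is the first of the nine forbidden subgraphs of Figure \ref{forbidden_graphs}; by Theorem \ref{linegraphchar} this already prevents $\textnormal{PIS}(R)$ from being a line graph. The whole argument rests on the direct product structure together with the observation that, since each $R_i$ is an Artinian local ring, its only prime ideal is $\mathcal{M}_i$. Consequently the prime ideals of $R$ are exactly the $n$ maximal ideals $P_i = R_1 \times \cdots \times \mathcal{M}_i \times \cdots \times R_n$, and an ideal sum gives an edge precisely when it equals one of these.

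First I would record the coordinatewise adjacency criterion. Writing ideals as $I = I_1 \times \cdots \times I_n$ and $J = J_1 \times \cdots \times J_n$, we have $I + J = (I_1 + J_1) \times \cdots \times (I_n + J_n)$, and this is prime if and only if there is a single index $k$ with $I_k + J_k = \mathcal{M}_k$ while $I_j + J_j = R_j$ for every $j \neq k$ (because a product of rings is a domain exactly when all but one factor vanishes and the remaining one is a domain). This reduces every adjacency question to bookkeeping of coordinates.

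For $n \ge 4$ I would then exhibit the claw explicitly. Take the three leaves $A_1, A_2, A_3$, where $A_i$ is the ideal equal to $\langle 0 \rangle$ in coordinate $i$ and equal to the full ring $R_j$ in every other coordinate, and take the center $C = \mathcal{M}_1 \times \mathcal{M}_2 \times \mathcal{M}_3 \times R_4 \times \cdots \times R_n$. Each $A_i$ is a nonzero proper ideal, and $C$ is a genuine vertex, distinct from the $A_i$; here the hypothesis $n \ge 4$ is exactly what keeps $C$ nonzero (via the full coordinate $R_4$) even in the case that $R_1, R_2, R_3$ are all fields, so that $\mathcal{M}_1 = \mathcal{M}_2 = \mathcal{M}_3 = \langle 0 \rangle$. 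Using the criterion above, $C + A_i = P_i$ for $i = 1, 2, 3$: the full coordinate of $A_i$ fills every coordinate $j \neq i$, while coordinate $i$ contributes $\mathcal{M}_i + \langle 0 \rangle = \mathcal{M}_i$, so $C \sim A_i$. On the other hand, any two distinct leaves miss different coordinates, whence $A_i + A_j = R$, which is not prime, giving $A_i \nsim A_j$. Therefore the subgraph induced by $\{C, A_1, A_2, A_3\}$ is precisely $K_{1,3}$.

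I do not expect a deep obstacle, since the argument is an explicit construction; the only point demanding care is to choose the center and leaves so that the induced subgraph is exactly the claw, with no spurious edge among the leaves and no missing edge from the center. The clean fact that makes this automatic is that two ideals which are full in all coordinates but one, with distinct missing coordinates, always sum to $R$ and hence are never adjacent, while the tailored center sums with each leaf to a distinct maximal ideal $P_i$.
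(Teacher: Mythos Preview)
Your argument is correct and takes essentially the same approach as the paper, namely exhibiting an induced $K_{1,3}$ and invoking Theorem~\ref{linegraphchar}. The paper uses a slightly different quadruple---center $\mathcal{M}_1 \times R_2 \times R_3 \times R_4$ together with three leaves below it, each summing with the center to the \emph{same} prime $P_1$---and first reduces to the case $n=4$, whereas your choice (leaves summing with the center to three \emph{distinct} primes) handles all $n \ge 4$ in one stroke; the underlying idea is identical.
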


\begin{proof}
    To prove the result, it is sufficient to show that $\textnormal{PIS}(R)$ is not a line graph when $n=4$. Let $R \cong R_1 \times R_2 \times R_3 \times R_4$. Consider the set 
    \[ S = \{ \mathcal{M}_1 \times R_2 \times R_3 \times R_4, \mathcal{M}_1 \times \mathcal{M}_2 \times R_3 \times R_4, \mathcal{M}_1 \times \mathcal{M}_2 \times \mathcal{M}_3 \times R_4, \mathcal{M}_1 \times \mathcal{M}_2 \times R_3 \times \mathcal{M}_4 \}. \]
    Then the subgraph induced by the set $S$ is isomorphic to $K_{1,3}$. Consequently, for $n \ge 4$, $\textnormal{PIS}(R)$ is not a line graph. 
\end{proof}

\begin{theorem}
    Let $R \cong R_1 \times R_2 \times \cdots \times R_n$ $(n \ge 2)$ be a non-local commutative ring, where each $R_i$ is a local ring with maximal ideal $\mathcal{M}_i$. Then $\textnormal{PIS}(R)$ is a line graph if and only if one of the following holds.
    \begin{enumerate}[\rm(i)]
        \item $R \cong F_1 \times F_2 \times F_3$.
        \item $R \cong F_1 \times F_2$.
        \item $R \cong R_1 \times F_2$ such that $R_1$ is a principal ideal ring with $\eta({\mathcal{M}_1}) = 2$. 
    \end{enumerate}
\end{theorem}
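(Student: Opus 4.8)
The plan is to combine Lemma \ref{fourproduct} with the forbidden–subgraph criterion of Theorem \ref{linegraphchar}. By Lemma \ref{fourproduct} the graph $\textnormal{PIS}(R)$ is not a line graph as soon as $n \ge 4$, so I only have to treat $n = 3$ and $n = 2$. In every negative case my single tool will be to exhibit an induced claw $K_{1,3}$ (one of the nine forbidden graphs), while in every positive case I will identify $\textnormal{PIS}(R)$ with the line graph of an explicit small graph. I will use repeatedly that in a finite product of local Artinian rings the prime ideals are exactly the maximal ideals $R_1 \times \cdots \times \mathcal{M}_i \times \cdots \times R_n$, that a product ideal $I_1 \times \cdots \times I_n$ is prime precisely when one coordinate is the maximal ideal of its factor and every other coordinate is the whole factor, and that (since these primes are pairwise incomparable) the only prime contained in a maximal ideal $P$ is $P$ itself; in particular a sum of two ideals lying inside such a $P$ is prime if and only if it equals $P$.

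For $n = 3$, write $R \cong R_1 \times R_2 \times R_3$. If some factor, say $R_1$, is not a field, then $\mathcal{M}_1 \ne \langle 0 \rangle$, and I claim the set
\[
\{\mathcal{M}_1 \times R_2 \times R_3,\ \mathcal{M}_1 \times \langle 0 \rangle \times \langle 0 \rangle,\ \langle 0 \rangle \times R_2 \times \langle 0 \rangle,\ \langle 0 \rangle \times \langle 0 \rangle \times R_3\}
\]
induces $K_{1,3}$: the first ideal is maximal, each of the other three is properly contained in it (hence adjacent to it), while the pairwise sums of the three leaves are $\mathcal{M}_1 \times R_2 \times \langle 0\rangle$, $\mathcal{M}_1 \times \langle 0 \rangle \times R_3$ and $\langle 0\rangle \times R_2 \times R_3$, none of which is prime (the first two have two non-full coordinates, and the third fails because $\langle 0\rangle$ is not a prime ideal of the non-field $R_1$). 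Thus all three factors must be fields, giving (i). Conversely, for $R \cong F_1 \times F_2 \times F_3$ a direct enumeration of the six non-trivial ideals shows that $\textnormal{PIS}(R)$ is the line graph of the net (a triangle with a pendant edge at each of its three vertices), so it is a line graph.

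For $n = 2$, write $R \cong R_1 \times R_2$ and split by the number of field factors. If neither factor is a field, then $\{\mathcal{M}_1 \times R_2,\ \mathcal{M}_1 \times \langle 0\rangle,\ \langle 0\rangle \times \mathcal{M}_2,\ \mathcal{M}_1 \times \mathcal{M}_2\}$ induces $K_{1,3}$, since the three leaves lie under the maximal ideal $\mathcal{M}_1 \times R_2$ and each pairwise sum equals $\mathcal{M}_1 \times \mathcal{M}_2$, which is not prime. If both factors are fields, then $R \cong F_1 \times F_2$ has exactly two non-trivial ideals whose sum is $R$, so $\textnormal{PIS}(R)$ is two isolated vertices, trivially a line graph; this is (ii). The remaining family is $R \cong R_1 \times F_2$ with $R_1$ not a field, where I must locate the threshold in (iii). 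If $R_1$ is not a principal ideal ring, choose two members $x,y$ of a minimal generating set of $\mathcal{M}_1$ (so $\langle x\rangle, \langle y\rangle \subsetneq \mathcal{M}_1$ and, by minimality, $\langle x\rangle \ne \langle y \rangle$); then $\{\mathcal{M}_1 \times F_2,\ \langle x\rangle \times \langle 0\rangle,\ \langle y\rangle \times \langle 0\rangle,\ \langle 0\rangle \times F_2\}$ induces $K_{1,3}$. If $R_1$ is a principal ideal ring with $\eta(\mathcal{M}_1) \ge 3$ (so $\mathcal{M}_1^2 \ne \langle 0\rangle$), then $\{\mathcal{M}_1 \times F_2,\ \mathcal{M}_1^2 \times \langle 0\rangle,\ \langle 0\rangle \times F_2,\ \mathcal{M}_1^2 \times F_2\}$ induces $K_{1,3}$. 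Finally, if $R_1$ is a principal ideal ring with $\eta(\mathcal{M}_1) = 2$, then $R$ has precisely the four non-trivial ideals $\mathcal{M}_1 \times \langle 0\rangle$, $R_1 \times \langle 0\rangle$, $\langle 0\rangle \times F_2$, $\mathcal{M}_1 \times F_2$, and a short computation shows $\textnormal{PIS}(R)$ is the paw (a triangle with one pendant edge), which is the line graph of the chair graph (the star $K_{1,3}$ with a pendant edge attached at one leaf); hence it is a line graph, giving (iii).

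The routine part is the bookkeeping inside each claw: I must check that the three leaves are genuine vertices (nonzero, proper, pairwise distinct) and that every pairwise sum fails to be prime, which always reduces to the observation that a product ideal with two non-full coordinates cannot be prime. The genuinely delicate point — the main obstacle — is the dichotomy within $R_1 \times F_2$: I have to confirm both that the minimal-generator leaves $\langle x\rangle, \langle y\rangle$ are distinct and that the threshold $\eta(\mathcal{M}_1) = 2$ is sharp, i.e. that the $\mathcal{M}_1^2$-claw exists the instant $\mathcal{M}_1^2 \ne \langle 0\rangle$ but collapses exactly when $\mathcal{M}_1^2 = \langle 0 \rangle$, leaving only the four ideals that assemble into the paw. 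For the two positive product cases I prefer to exhibit the preimage graph (net, paw) explicitly rather than check all nine forbidden configurations, since producing a graph whose line graph is $\textnormal{PIS}(R)$ is both cleaner and self-certifying.
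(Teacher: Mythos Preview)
Your proposal is correct and follows essentially the same route as the paper: reduce to $n\le 3$ via Lemma~\ref{fourproduct}, eliminate every bad case by exhibiting an induced $K_{1,3}$ (your claw sets are either identical to or symmetric variants of the paper's), and handle the converse by explicitly identifying $\textnormal{PIS}(R)$ as the line graph of a small graph. The only cosmetic difference is that you name the preimage graphs (net, chair) while the paper points to figures $H_1$, $H_2$.
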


\begin{proof}
Suppose that $\textnormal{PIS}(R)$ is a line graph of some graph. By Lemma \ref{fourproduct}, we have $n \le 3$. First, let $n =3$ i.e., $R \cong R_1 \times R_2 \times R_3$. Assume that one of $R_i$ is not a field. Without loss of generality, assume that $R_1$ is not a field. Consider the set $A = \{ \mathcal{M}_1 \times R_2 \times R_3, \langle 0 \rangle \times R_2 \times \langle 0 \rangle, \mathcal{M}_1 \times \langle 0 \rangle \times \langle 0 \rangle, \langle 0 \rangle \times \langle 0 \rangle \times R_3\}$. Then the subgraph induced by $A$ is isomorphic to $K_{1,3}$, a contradiction. It implies that $R\cong F_1 \times F_2 \times F_3$.

 Now let $R\cong R_1 \times R_2$. Assume that both $R_1$ and $R_2$ are not fields. Consider the set $B = \{  R_1 \times \mathcal{M}_2, \mathcal{M}_1 \times \langle 0 \rangle, \mathcal{M}_1 \times \mathcal{M}_2, \langle 0 \rangle \times \mathcal{M}_2 \}$. Then $\textnormal{PIS}(B)$ is isomorphic to $K_{1,3}$, which is not possible. It follows that one of $R_i$ must be the field. Without loss of generality, assume that $R_2$ is a field, i.e. $R \cong R_1 \times F_2$. Let $\{ x_1, x_2, \ldots, x_m \}$ be minimal set of generators of the maximal ideal $\mathcal{M}_1$. Assume that $m \ge 2$. Consider the set $C = \{ \mathcal{M}_1 \times F_2, \langle 0 \rangle \times F_2, \langle x_1 \rangle \times \langle 0 \rangle, \langle x_2 \rangle \times \langle 0 \rangle \} $. Then the subgraph induced by $C$ is isomorphic to $K_{1,3}$, a contradiction. It implies that $m=1$, i.e. $R_1$ is a principal ideal ring. Moreover, if $\eta(\mathcal{M}_1) \ge 3$, then the subgraph induced by the set $\{ \mathcal{M}_1 \times F_2, \mathcal{M}_1^2 \times F_2, \langle 0 \rangle \times F_2, \mathcal{M}_1^2 \times \langle 0 \rangle\} $ is isomorphic to $K_{1,3}$, which is not possible. Consequently, either $R \cong R_1 \times F_2$ such that $\eta(\mathcal{M}_1) = 2$ or $R \cong F_1 \times F_2$.

 Conversely, one can observe that $\textnormal{PIS}(F_1 \times F_2 ) = L(2K_2)$ and $\textnormal{PIS}(F_1 \times F_2 \times F_3) = L(H_1)$ (see Figure \ref{linefigure_1}). Also, $\textnormal{PIS}(R_1 \times F_2 ) = L(H_2)$ (see Figure \ref{linefigure_2}), where $\eta(\mathcal{M}_1) = 2$.
 \begin{figure}[h!]
			\centering
			\includegraphics[width=1 \textwidth]{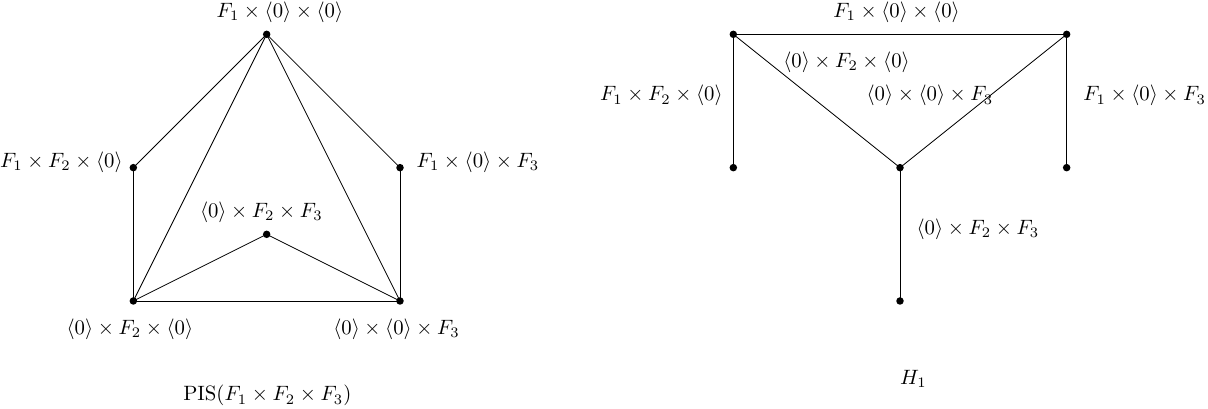}
			\caption{$\textnormal{PIS}(F_1 \times F_2 \times F_3)$ is the line graph of $H_1$ }
   \label{linefigure_1}
\end{figure}
 \begin{figure}[h!]
			\centering
			\includegraphics[width=0.8 \textwidth]{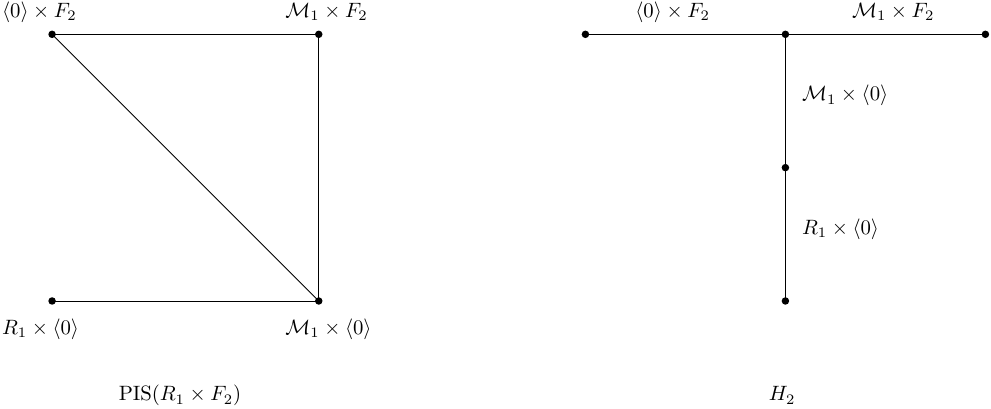}
			\caption{$\textnormal{PIS}(R_1 \times F_2)$ is the line graph of $H_2$ }
   \label{linefigure_2}
\end{figure}
\end{proof}

\section{Prime ideal sum graphs which are the complement of some line graphs}\label{section3}

In this section, we characterize all commutative Artinian rings whose prime ideal sum graphs are the complement of line graphs. We begin to characterize all the commutative local rings $R$ such that $\textnormal{PIS}(R)$ is the complement of a line graph.

\begin{theorem}
    Let $R$ be a commutative local ring with maximal ideal $\mathcal{M}$. Then $\textnormal{PIS}(R)$ is the complement of a line graph if and only if one of the following holds.
    \begin{enumerate}[\rm(i)]
        \item $R$ is a principal ideal ring.
        \item $\mathcal{M} = \langle x, y \rangle$ for some $x,y \in R$ such that $x^2 = y^2 = xy =0$. 
    \end{enumerate}
\end{theorem}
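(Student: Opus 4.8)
The plan is to first turn the adjacency relation into a statement of linear algebra over the residue field, then treat the two directions separately, using in the converse only the single forbidden configuration $K_3 \cup K_1$ (a triangle together with an isolated vertex), which is the complement of the claw $K_{1,3}$ and hence one of the nine graphs of Figure \ref{complenentforbidden_graphs}.

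First I would record the key reduction. Since $R$ is Artinian local, its only prime ideal is $\mathcal{M}$, so for nonzero proper ideals $I,J$ we have $I \sim J$ in $\textnormal{PIS}(R)$ if and only if $I + J = \mathcal{M}$. By Nakayama's lemma, $I + J = \mathcal{M}$ if and only if the images of $I$ and $J$ span the $F$-vector space $V := \mathcal{M}/\mathcal{M}^2$, where $F = R/\mathcal{M}$. Thus adjacency depends only on the images $\bar I \subseteq V$, with $I \sim J$ if and only if $\bar I + \bar J = V$; in particular $\mathcal{M}$ (whose image is all of $V$) is adjacent to every other vertex. I would also note that every $F$-subspace $W \subseteq V$ is realized as $\bar I$ for some nonzero proper ideal $I$ (take the preimage of $W$ in $\mathcal{M}$, or a principal ideal generated by a lift of a basis of $W$), which lets me build ideals with prescribed images.

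For the \emph{if} direction: if $R$ is a principal ideal ring with $\eta(\mathcal{M}) = n$, the nonzero proper ideals form the chain $\mathcal{M} \supset \mathcal{M}^2 \supset \cdots \supset \mathcal{M}^{n-1}$ with $\mathcal{M}^i + \mathcal{M}^j = \mathcal{M}^{\min(i,j)}$, so $\textnormal{PIS}(R)$ is the star $K_{1,n-2}$, whose complement $K_{n-2} \cup K_1 = L(K_{1,n-2} \cup K_2)$ is a line graph. If $\mathcal{M} = \langle x,y\rangle$ with $x^2=y^2=xy=0$, then $\mathcal{M}^2 = 0$, so $V = \mathcal{M}$ is $2$-dimensional and the nonzero proper ideals are exactly the lines of $V$ together with $\mathcal{M}$; any two distinct such ideals span $V$, so $\textnormal{PIS}(R)$ is complete and its complement is edgeless, equal to $L(tK_2)$. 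Hence in both cases $\textnormal{PIS}(R)$ is the complement of a line graph. For the \emph{only if} direction I would argue contrapositively, exhibiting an induced $K_3 \cup K_1$ whenever $R$ is neither a principal ideal ring nor of type (ii). Writing $m$ for the minimal number of generators of $\mathcal{M}$, so $\dim_F V = m$: if $m \geq 3$, choose three distinct hyperplanes $H_1, H_2, H_3$ of $V$ through a common line $\ell$ (possible since $\dim(V/\ell) \geq 2$ and $|F| \geq 2$ yields at least three such hyperplanes), and take ideals with these images together with an ideal whose image is $\ell$; the three hyperplane-ideals are pairwise adjacent while the $\ell$-ideal is adjacent to none of them. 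If $m = 2$ but $\mathcal{M}^2 \neq 0$, then $V$ is $2$-dimensional with at least three lines, three line-ideals with distinct images form a triangle, and $\mathcal{M}^2$ (a nonzero ideal with image $0$) is adjacent to none of them. Either way we obtain the forbidden $K_3 \cup K_1$, forcing $m \leq 2$ and, when $m = 2$, $\mathcal{M}^2 = 0$, that is $x^2 = y^2 = xy = 0$.

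I expect the main subtlety to be the converse. Unlike the line-graph case of Theorem \ref{primeline}, the claw $K_{1,3}$ is itself a complement of a line graph, so its presence is harmless and cannot be exploited; one must instead recognize that the relevant obstruction is its complement $K_3 \cup K_1$, and then construct three mutually adjacent ideals forming a triangle together with an ideal adjacent to none of them. The linear-algebra reduction through $\mathcal{M}/\mathcal{M}^2$ is precisely what makes this transparent, reducing the search for a triangle-plus-isolated-vertex to the elementary statement that several distinct hyperplanes can be taken through one common line, and it also explains why the admissible family here is strictly larger than in the line-graph characterization.
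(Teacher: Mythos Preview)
Your proof is correct and rests on the same mechanism as the paper's: both directions hinge on the single forbidden configuration $\overline{\Gamma_1}=K_3\cup K_1$, and both identify $\textnormal{PIS}(R)$ as a star in case (i) and as a complete graph in case (ii). The difference is one of packaging. The paper handles the only-if direction by writing down explicit generating sets in four separate cases ($m\ge 4$; $m=3$; $m=2$ with $x^2\neq 0$; $m=2$ with $xy\neq 0$), while your Nakayama reduction to $V=\mathcal{M}/\mathcal{M}^2$ collapses all of $m\ge 3$ into the single ``three hyperplanes through a common line'' picture and disposes of $m=2$, $\mathcal{M}^2\neq 0$ in one stroke using $\mathcal{M}^2$ itself as the isolated vertex. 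Your framing is cleaner and makes the underlying geometry visible; the paper's is more elementary in that it avoids invoking Nakayama, at the cost of some case-splitting. The arguments are otherwise the same.
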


\begin{proof}

Suppose that $\textnormal{PIS}(R)$ is the complement of a line graph. Let $\{ x_1, x_2, \ldots , x_m\}$ be the minimal set of generators of $\mathcal{M}$. If $m \ge 4$, then the subgraph induced by the set $\{ \langle x_4 \rangle, \langle x_2, x_3, \ldots, x_m \rangle, \langle x_1, x_3, \ldots, x_m \rangle, \langle x_1, x_2, x_4, \ldots, x_m \rangle\} $ is isomorphic to $\overline{\Gamma_1}$, a contradiction. For $m =3$, consider the set $A = \{ \langle x_1 \rangle, \langle x_1, x_2 \rangle, \langle x_1, x_3 \rangle, \langle x_2 + x_3\rangle  \}$. Then the subgraph induced by $A$ is isomorphic to $\overline{\Gamma_1}$, which is not possible. Next, let $m=2$, i.e. $\mathcal{M} = \langle x,y \rangle $ for some $x, y \in R$. If $x^2 \neq 0$, then the subgraph induced by the set $\{ \langle x^2 \rangle, \langle x \rangle, \langle y \rangle, \langle x+y \rangle \}$ is isomorphic to $\overline{\Gamma_1}$, which is not possible. If $xy \neq 0$, then the subgraph induced by the set $\{ \langle xy \rangle, \langle x \rangle, \langle y \rangle, \langle x+y \rangle \}$ is isomorphic to $\overline{\Gamma_1}$, again a contradiction. Consequently, we obtain $x^2 = y^2= xy = 0$. Thus, for $\textnormal{PIS}(R)$ to be the complement of a line graph, we must have either $m = 1$, i.e. $R$ is a principal ideal ring or $\mathcal{M} = \langle x,y \rangle $ for some $x, y \in R$ such that $x^2 = y^2= xy = 0$.

   Conversely, first assume that $R$ is a principal ideal ring. Then note that $\textnormal{PIS}(R)$ is a star graph, and so it does not have any induced subgraph which is isomorphic to any one of the graphs given in Figure \ref{complenentforbidden_graphs}. Therefore, $\textnormal{PIS}(R)$ is the complement of a line graph. Next, assume that $\mathcal{M} = \langle x,y \rangle $ for some $x, y \in R$ and $x^2 = y^2= xy = 0$. By the similar argument used in the Theorem \ref{primeline}, we obtain that $\textnormal{PIS}(R)$ is a complete graph on $|R/ \mathcal{M}| +2$ vertices. Further, note that $\textnormal{PIS}(R) = \overline{L\big( (|R/ \mathcal{M}| +2)K_2 \big)}$. Thus, the result holds.
\end{proof}

We Now characterize all non-local commutative rings whose prime ideal sum graphs are the complement of line graphs.

\begin{theorem}
    Let $R \cong R_1 \times R_2 \times \cdots \times R_n$ $(n \ge 2)$ be a non-local commutative ring, where each $R_i$ is a local ring with maximal ideal $\mathcal{M}_i$. Then $\textnormal{PIS}(R)$ is the complement of a line graph if and only if one of the following holds.
    \begin{enumerate}[\rm(i)]
        \item $R \cong F_1 \times F_2 \times F_3$.
        \item $R \cong F_1 \times F_2$.
        \item $R \cong R_1 \times F_2$ such that $R_1$ is a principal ideal ring with $\eta({\mathcal{M}_1}) = 2$. 
    \end{enumerate}
\end{theorem}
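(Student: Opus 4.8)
The plan is to mirror the structure of the preceding (line-graph) section: for the forward direction I rule out every configuration outside (i)--(iii) by exhibiting an induced subgraph isomorphic to one of the complement-forbidden graphs of Figure \ref{complenentforbidden_graphs}, and for the converse I compute $\textnormal{PIS}(R)$ explicitly in each of the three cases and check that its complement is a line graph. Throughout I use that a sum $I+J$ is a prime ideal of $R_1\times\cdots\times R_n$ exactly when one coordinate of $I+J$ equals the corresponding maximal ideal and every other coordinate is the whole ring (since in an Artinian local ring the only prime is the maximal ideal). As the analogue of Lemma \ref{fourproduct} I first show $n\le 3$: for $n\ge 4$ the four ideals $R_1\times R_2\times R_3\times\mathcal{M}_4\times R_5\times\cdots\times R_n$, $R_1\times R_2\times\mathcal{M}_3\times\mathcal{M}_4\times R_5\times\cdots\times R_n$, $R_1\times\mathcal{M}_2\times R_3\times\mathcal{M}_4\times R_5\times\cdots\times R_n$ and $\mathcal{M}_1\times R_2\times R_3\times R_4\times R_5\times\cdots\times R_n$ induce $\overline{\Gamma_1}$; the first three are pairwise adjacent with common sum $R_1\times R_2\times R_3\times\mathcal{M}_4\times R_5\times\cdots\times R_n$, while each sum of the fourth with one of them equals $R$, and no hypothesis on the $R_i$ is needed.

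For $n=3$ I treat the case where some factor, say $R_1$, is not a field, so $\mathcal{M}_1\neq\langle 0\rangle$. The ``rainbow'' triangle $\mathcal{M}_1\times\mathcal{M}_2\times R_3$, $\mathcal{M}_1\times R_2\times\mathcal{M}_3$, $R_1\times\mathcal{M}_2\times\mathcal{M}_3$, whose pairwise sums are the three distinct primes $\mathcal{M}_1\times R_2\times R_3$, $R_1\times\mathcal{M}_2\times R_3$ and $R_1\times R_2\times\mathcal{M}_3$, together with the vertex $\mathcal{M}_1\times\langle 0\rangle\times\langle 0\rangle$ (nonzero because $\mathcal{M}_1\neq\langle 0\rangle$) induces $\overline{\Gamma_1}$, forcing $R\cong F_1\times F_2\times F_3$. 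For $n=2$, if both factors are non-fields then $\{\mathcal{M}_1\times R_2,\ \mathcal{M}_1\times\langle 0\rangle,\ \langle 0\rangle\times R_2,\ R_1\times\langle 0\rangle\}$ induces $\overline{\Gamma_1}$, so one factor is a field, say $R_2=F_2$. If $\mathcal{M}_1$ is not principal, with minimal generators including $x_1,x_2$, then $\langle x_1\rangle\times F_2$, $\langle x_2\rangle\times F_2$, $\langle x_1+x_2\rangle\times F_2$ form a triangle (each pairwise sum is $\mathcal{M}_1\times F_2$) and $R_1\times\langle 0\rangle$ is adjacent to none of them, again giving $\overline{\Gamma_1}$; hence $R_1$ is a principal ideal ring.

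The crux is the remaining case $R\cong R_1\times F_2$ with $R_1$ a principal ideal ring and $\eta(\mathcal{M}_1)\ge 3$, where $\overline{\Gamma_1}$ is no longer available: the only vertices with first coordinate $\mathcal{M}_1$ are $\mathcal{M}_1\times\langle 0\rangle$ and $\mathcal{M}_1\times F_2$, so every triangle contains the prime $\mathcal{M}_1\times F_2$, and every non-neighbour of $\mathcal{M}_1\times F_2$ has first coordinate $R_1$, which then forces it to be adjacent to $\mathcal{M}_1\times\langle 0\rangle$; thus no vertex is isolated from a triangle. Instead I use the six ideals $R_1\times\langle 0\rangle$, $\mathcal{M}_1\times\langle 0\rangle$, $\mathcal{M}_1^2\times\langle 0\rangle$, $\mathcal{M}_1\times F_2$, $\mathcal{M}_1^2\times F_2$, $\langle 0\rangle\times F_2$, which are pairwise distinct and nonzero precisely because $\eta(\mathcal{M}_1)\ge 3$. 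A direct computation shows their induced subgraph $G$ is independent of $\eta(\mathcal{M}_1)$ (only the relevant minima of the exponents enter), that the complement $\overline{G}$ violates the Krausz clique-partition criterion and so is not a line graph, yet every five-vertex induced subgraph of $\overline{G}$ is a line graph. Hence $G$ is a minimal forbidden subgraph, i.e. $G\cong\overline{\Gamma_j}$ for some $j\in\{2,\dots,9\}$, which gives $\eta(\mathcal{M}_1)\le 2$ and completes the forward direction.

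For the converse I verify each case directly. In (ii), $\textnormal{PIS}(F_1\times F_2)=2K_1=\overline{L(P_3)}$. In (iii), the ideals $R_1\times\langle 0\rangle,\ \mathcal{M}_1\times\langle 0\rangle,\ \mathcal{M}_1\times F_2,\ \langle 0\rangle\times F_2$ give $\textnormal{PIS}(R)\cong$ the paw graph, whose complement is $P_3\cup K_1=L(P_4\cup K_2)$. In (i), $\textnormal{PIS}(F_1\times F_2\times F_3)$ is the six-vertex graph consisting of a triangle together with, for each of its edges, a further vertex joined to that edge's two endpoints; its complement is the net graph, which is a line graph. In all three cases $\overline{\textnormal{PIS}(R)}$ is a line graph, so $\textnormal{PIS}(R)$ is the complement of a line graph. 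The main obstacle is exactly the subcase $\eta(\mathcal{M}_1)\ge 3$: unlike everywhere else, the claw-complement $\overline{\Gamma_1}$ does not occur, and one must instead locate a genuinely different forbidden configuration and confirm, via Krausz's criterion or Beineke's list, that it is one of $\overline{\Gamma_2},\dots,\overline{\Gamma_9}$.
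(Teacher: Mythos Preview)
Your overall strategy matches the paper's: rule out each bad configuration by exhibiting a complement-forbidden induced subgraph, then verify the three surviving cases explicitly. Most of your witnessing sets differ from the paper's but work equally well, and for the key $\eta(\mathcal{M}_1)\ge 3$ step you pick exactly the same six ideals as the paper does; the paper simply identifies the resulting induced subgraph directly as $\overline{\Gamma_4}$, which is shorter than your Krausz/minimality detour but reaches the same conclusion. Your converse computations are correct.

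There is, however, one genuine slip. In the step ``$\mathcal{M}_1$ not principal'' you take $x_1,x_2$ from a minimal generating set and assert that the pairwise sums of $\langle x_1\rangle$, $\langle x_2\rangle$, $\langle x_1+x_2\rangle$ are all $\mathcal{M}_1$. In fact each pairwise sum is only $\langle x_1,x_2\rangle$, and this equals $\mathcal{M}_1$ precisely when the minimal number of generators is $2$; for $m\ge 3$ the ideal $\langle x_1,x_2\rangle\times F_2$ is not prime and your ``triangle'' disappears, so no $\overline{\Gamma_1}$ is produced. The paper avoids this by using a five-vertex configuration that works uniformly for all $m\ge 2$: on $\{\mathcal{M}_1\times\langle 0\rangle,\ \langle 0\rangle\times F_2,\ \langle x_1\rangle\times\langle 0\rangle,\ \langle x_2\rangle\times\langle 0\rangle,\ \langle x_1+x_2\rangle\times\langle 0\rangle\}$ the graph $\textnormal{PIS}$ induces a single edge together with three isolated vertices, so the complement is $K_5$ minus an edge, namely $\Gamma_3$. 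Your plan is easily repaired either by adopting this set or by splitting into $m=2$ (where your triangle argument is valid) and $m\ge 3$ (handled separately).
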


\begin{proof}
Suppose that $\textnormal{PIS}(R)$ is the complement of a line graph.  For $n =4$, consider the set 
    \[ S' = \{ \mathcal{M}_1 \times \mathcal{M}_2 \times R_3 \times R_4, R_1 \times \mathcal{M}_2 \times \mathcal{M}_3 \times R_4, \mathcal{M}_1 \times R_2 \times \mathcal{M}_3 \times R_4, \mathcal{M}_1 \times \mathcal{M}_2 \times \mathcal{M}_3 \times \mathcal R_4 \}. \]
    Then the subgraph induced by the set $S'$ is isomorphic to $\overline{\Gamma_1}$, a contradiction. Consequently, for $n \ge 4$, $\textnormal{PIS}(R)$ is not the complement of a line graph. We may now assume that $n \le 3$. Let $R \cong R_1 \times R_2 \times R_3$. Assume that one of $R_i$ is not a field. Without loss of generality, suppose that $R_1$ is not a field. Then the subgraph induced by the set $X = \{ \mathcal{M}_1 \times R_2 \times R_3, \mathcal{M}_1 \times \langle 0 \rangle \times R_3, \langle 0 \rangle \times R_2 \times R_3, R_1 \times R_2 \times \langle 0 \rangle \} $ is isomorphic to $\overline{\Gamma_1}$, a contradiction. It implies that $R\cong F_1 \times F_2 \times F_3$.

 Next, let $R\cong R_1 \times R_2$. Assume that both $R_1$ and $R_2$ are not fields. Consider the set $Y = \{  \mathcal{M}_1 \times R_2, \langle 0 \rangle \times R_2, \mathcal{M}_1 \times  \langle 0 \rangle, R_1 \times \langle 0 \rangle\}$. Then $\textnormal{PIS}(Y)$ is isomorphic to $\overline{\Gamma_1}$, which is not possible. Consequently, one of $R_i$ must be a field. Without loss of generality, assume that $R_2$ is a field, i.e. $R \cong R_1 \times F_2$. Let $\{ x_1, x_2, \ldots, x_m \}$ be a minimal set of generators of the maximal ideal $\mathcal{M}_1$. Assume that $m \ge 2$. Then the subgraph induced by the set $Z = \{ \mathcal{M}_1 \times \langle 0 \rangle, \langle 0 \rangle \times R_2, \langle x_1 \rangle \times \langle 0 \rangle, \langle x_2 \rangle \times \langle 0 \rangle, \langle x_1 + x_2 \rangle \times \langle 0 \rangle \} $ is isomorphic to $\overline{\Gamma_3}$, a contradiction. It implies that $m=1$ and so $R_1$ is a principal ideal ring. If $\eta(\mathcal{M}_1) \ge 3$, then the subgraph induced by the set $\{ \mathcal{M}_1 \times \langle 0 \rangle, \mathcal{M}_1^2 \times F_2, \mathcal{M}_1 \times F_2, \langle 0 \rangle \times F_2, \mathcal{M}_1^2 \times \langle 0 \rangle, R_1 \times \langle 0 \rangle\} $ is isomorphic to $\overline{\Gamma_4}$ (see Figure \ref{linefigure_5}), which is not possible. Thus, either $R \cong R_1 \times F_2$ such that $\eta(\mathcal{M}_1) = 2$ or $R \cong F_1 \times F_2$.
  \begin{figure}[h!]
			\centering
			\includegraphics[width=0.55 \textwidth]{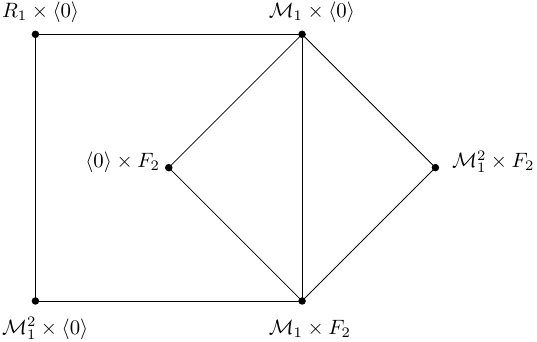}
			\caption{ Induced subgraph of $\textnormal{PIS}(R_1 \times F_2)$, where $\eta(\mathcal{M}_1) \ge 3$. }
   \label{linefigure_5}
\end{figure}

 Conversely, one can observe that $\textnormal{PIS}(F_1 \times F_2 ) = \overline{L(P_3)}$ and $\textnormal{PIS}(F_1 \times F_2 \times F_3) = \overline{L(H_1')}$ (see Figure \ref{linefigure_3}). By Figure \ref{linefigure_4}, we obtain $\textnormal{PIS}(R_1 \times F_2 ) = \overline{L(H_2')}$ with $\eta(\mathcal{M}_1) = 2$.
 \begin{figure}[h!]
			\centering
			\includegraphics[width=1 \textwidth]{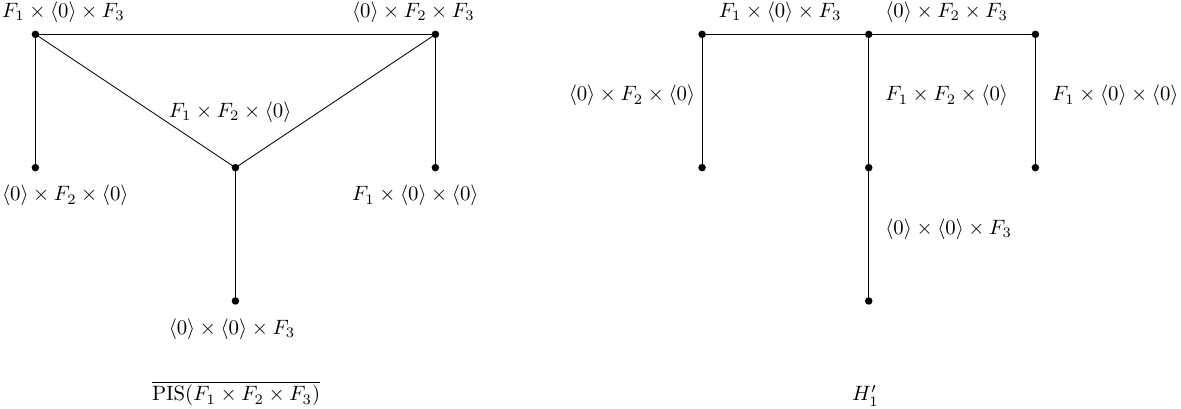}
			\caption{$\overline{\textnormal{PIS}(F_1 \times F_2 \times F_3)}$ is the line graph of $H_1'$ }
   \label{linefigure_3}
\end{figure}
 \begin{figure}[h!]
			\centering
			\includegraphics[width=1 \textwidth]{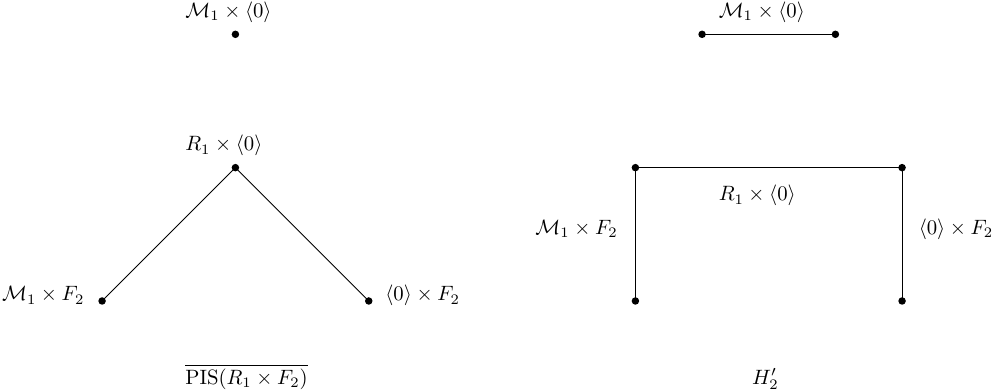}
			\caption{$\overline{\textnormal{PIS}(R_1 \times F_2)}$ is the line graph of $H_2'$ }
   \label{linefigure_4}
\end{figure}
\end{proof}

\newpage
\textbf{Acknowledgement:} The first author gratefully acknowledges Birla Institute of Technology and Science (BITS) Pilani, India, for providing financial support.


\end{document}